\def\smashsmash{\relax
    \ifmmode\def\next{\mathpalette\mathsm@sh}\else\let\next\makesm@sh
    \fi \next}
\newenvironment{lyxcode}
{\par\begin{list}{}{
\setlength{\rightmargin}{\leftmargin}
\setlength{\listparindent}{0pt}
\raggedright
\setlength{\itemsep}{0pt}
\setlength{\parsep}{0pt}
\normalfont\ttfamily}%
 \item[]}
{\end{list}}
\DeclareMathOperator{\mathspan}{span}
\DeclareMathOperator{\diag}{diag}
\begin{document}
\title{Wilkinson's Inertia-Revealing Factorization and Its Application to Sparse Matrices}
\author[1]{Alex Druinsky}
\author[1]{Eyal Carlebach}
\author[1]{Sivan Toledo}
\authormark{DRUINSKY \textsc{et al}}
\address[1]{\orgdiv{School of Computer Science}, \orgname{Tel Aviv University}, \orgaddress{Tel Aviv 69978, \country{Israel}}}
\corres{Sivan Toledo,
School of Computer Science,
Schreiber Building, Room 013,
Tel Aviv University,
Tel Aviv 69978,
Israel. \email{\href{mailto:stoledo@tau.ac.il}{stoledo@tau.ac.il}}}

\abstract[Summary]{We propose a new inertia-revealing factorization for sparse symmetric matrices. The factorization scheme and the method for extracting the inertia from it were proposed in the 1960s for dense, banded, or tridiagonal matrices, but they have been abandoned in favor of faster methods. We show that this scheme can be applied to any sparse symmetric matrix and that the fill in the factorization is bounded by the fill in the sparse $QR$ factorization of the same matrix (but is usually much smaller). We describe our serial proof-of-concept implementation, and present experimental results, studying the method's numerical stability and performance.}

\keywords{sparse matrix factorization, matrix inertia, symmetric indefinite matrices}

\jnlcitation{\cname{%
\author{Druinsky A},
\author{Carlebach E}, and
\author{Toledo S}}
(\cyear{2017}),
\ctitle{Wilkinson's Inertia-Revealing Factorization
and Its Application to Sparse Matrices}, \cjournal{Numer Linear Algebra Appl}, \cvol{XXX}.}

\maketitle

\section{Introduction}

The inertia of a real symmetric matrix $A$ is a triplet of numbers
that count how many of $A$'s eigenvalues are positive, negative or
zero, respectively. The first two of these numbers are called the
positive and negative indices of inertia, respectively, and the last
one is the nullity of the matrix. The inertia is easy to obtain by
computing all of the eigenvalues of $A$. However, even for moderately
large sparse problems, computing all of the eigenvalues is either
expensive or infeasible. Over the years, inertia algorithms have been
developed that do not resort to computing the full spectrum. Such
algorithms are the focus of this paper.

A fast inertia algorithm can facilitate a number of computational
tasks. As a first example, consider an eigenvalue of $A$ that is
isolated from the rest of the spectrum within a known interval $[x_{0},x_{1})$.
Let us see how to compute the multiplicity of that eigenvalue by taking
advantage of a fast inertia algorithm. The key is the property that
the negative index of inertia of the matrix $A-xI$, where $x$ is
real, is the number of eigenvalues of $A$ that lie to the left of
$x$. To compute the multiplicity, we compute the negative indices
of inertia of the matrices $A-x_{1}I$ and $A-x_{0}I$, and subtract
the latter index from the former. This yields the number of eigenvalues
that are to the left of $x_{1}$ but not to the left of $x_{0}$,
which is exactly the multiplicity of the target eigenvalue. 

The same idea works if we need to compute the aggregate multiplicity
of all of the eigenvalues within a known interval. This is useful
when we compute all of the eigenvalues within that interval using
a Lanczos shift-and-invert eigensolver, and we wish to ascertain that
no eigenvalues have been missed.\cite{zhang07}

Another example is when we need to compute a histogram of the spectrum
within an interval of interest. In this case, we divide the interval
into subintervals and compute the number of eigenvalues within each
subinterval using the above technique. Such a histogram is necessary
when we wish to compute all of the eigenvalues within the interval
of interest, and we need to partition the work evenly among multiple
processors.\cite{aktulga14} Here, each inertia computation is independent
from the others, which makes this scheme suitable for parallel computation.

The final example is bisection. In bisection, we compute the eigenvalues
of~$A$ by using the negative index of inertia of $A-xI$ as a binary-search
oracle for locating the eigenvalues by their ordinals (for an example,
see Golub and Van Loan\cite{golub13}*{Sec~8.4.1}).

To facilitate the above (and similar) tasks, we propose a new algorithm
that computes the inertia of sparse symmetric matrices. Our new sparse
inertia algorithm combines two old and obsolete algorithmic ideas
into a useful and novel algorithm. One idea is an algorithm that was
proposed by Wilkinson\cite{wilkinson65}*{p~236--240} for computing the inertia of dense, banded,
or tridiagonal matrices. Wilkinson
recommended the algorithm as the building block of a bisection eigensolver
for tridiagonal matrices. Over the years, this algorithm was replaced
by the use of $LDL^{T}$ factorizations of shifts of the matrix (and
by other tridiagonal eigensolvers; see~\cites{demmel97,parlett98,stewart01}).
The newer alternatives are more efficient and more stable; Wilkinson's
inertia idea appeared to be obsolete. The other idea is the row-by-row
sparse $QR$ factorization by George and Heath.\cite{george80}
It was replaced by sparse multifrontal $QR$ factorizations\cite{liu86}
(for a recent implementation, see~\cite{davis11a}). We have discovered
that George and Heath's approach to sparse factorizations and its
analysis can be applied to Wilkinson's inertia algorithm. This discovery
is what motivates the new algorithm.

Existing algorithms for computing the inertia of sparse matrices work
by computing a so-called symmetric-indefinite factorization (see~\cite{duff83}
for an early example). Such algorithms work well in practice, but
they are heuristic in the sense that no guarantees can be made about
the amount of work and memory that they require on any individual
matrix. In contrast, on many important classes of matrices, the new
algorithm provides an a priori bound on the fill that it generates
and hence on its storage requirements and (indirectly) on its running
time. A counterintuitive feature of the algorithm is that it computes
a nonsymmetric factorization; we sacrifice symmetry, but we gain bounds
on the fill.

Our implementation of the method is inspired by CSparse, Tim Davis's
concise sparse-matrix library.\cite{davis06} We felt that it was
premature to invest the kind of engineering effort that is required
to produce production-quality sparse-matrix factorization software,
and so we translated the method into code in a straightforward way,
opting for simplicity whenever possible. This means that the resulting
code is serial and does not provide optimal performance, but its performance
is predictable and easy to understand to anyone who understands the
basic algorithm.

The rest of the paper is organized as follows. We present the required
background material in Section~\ref{sec:background}. Section~\ref{sec:algorithm}
describes our sparse implementation and its effect on the sparsity
pattern of the matrix. We analyze the numerical stability of the algorithm
in Section~\ref{sec:numerical-analysis}, and present numerical experiments
in Section~\ref{sec:numerical-experiments}. The performance of our
implementation is studied experimentally in Section~\ref{sec:performance-experiments},
followed by our conclusions from this research in Section~\ref{sec:discussion-and-conclusions}.

\section{\label{sec:background}Background}

\subsection{\label{subsec:general-purpose-eigensolvers}General-Purpose Eigensolvers}

Eigenvalues of symmetric dense matrices are almost always computed
using a two-step procedure.\cite{golub13}*{Sec~8} In the first
step, we reduce the matrix to tridiagonal form using a sequence of
orthogonal similarity transformations. This reduction has the form
$Q^{T}AQ=T$, where $Q$ is orthogonal and $T$ is tridiagonal. The
reduction preserves the spectrum of $A$ and allows us to compute
$A$'s eigenvalues by applying a tridiagonal eigensolver to $T$;
this is the second step.

The computational cost of reducing an $n$-by-$n$ matrix $A$ to
$T$ is $\Theta(n^{3})$ arithmetic operations, and the cost of tridiagonal
eigensolvers such as bisection, the implicit $QR$ method or the more
recent MRRR algorithm\cite{dhillon06} is $\Theta(n^{2})$. Because
of the upfront investment in reducing the matrix to $T$, there is
no advantage in computing a subset of the eigenvalues; the whole spectrum
is produced at essentially the same cost. Eigenvectors can also be
computed: this costs $\Theta(n^{2})$ for the eigenvectors of $T$
and $\Theta(n^{3})$ to transform them into eigenvectors of $A$ by
reversing the effect of the orthogonal similarities.

\subsection{Lanczos}

The orthogonal-tridiagonalization approach is not suitable for sparse
matrices because orthogonal similarity transformations fill the matrix
with nonzeros, resulting in a computational cost of $\Theta(n^{3})$
even when $A$ is very sparse. The most widely applicable approach
for sparse symmetric eigenproblems is the Lanczos iteration. Here
we compute an orthogonal basis $Q$ of the Krylov subspace $K_{t}=\mathspan(b,Ab,\dotsc,A^{t-1}b)$,
where $t$ is the dimension of the subspace and $b$ is a nonzero
starting vector, usually chosen at random. We then use $Q$ to orthogonally
project $A$ along the Krylov subspace, producing a $t$-by-$t$ tridiagonal
matrix $T$ whose eigenvalues serve as approximations to the eigenvalues
of $A$. A popular implementation of this approach is the implicitly
restarted Lanczos method, available in the ARPACK library.\cites{calvetti94,lehoucq97}

Lanczos produces each column of $Q$ by multiplying $A$ with the
previous column, and then orthogonalizing the resulting vector against
the existing columns. The computational cost is $t-1$ matrix-vector
products and $\Theta(t^{2}n)$ arithmetic operations for the orthogonalization
operations. Typical sparse matrices can be of dimension $10^{6}$
or higher, and so letting~$t$ approach $n$ or even $\sqrt{n}$
would make the computational cost prohibitive. For this reason, the
number of iterative steps is usually chosen to be much smaller than~$n$.
Stopping after a small number of iterations means that only a few
of $A$'s eigenvalues are well approximated, typically those at the
edges of the spectrum.

When we wish to compute eigenvalues in the interior of the spectrum,
we compute a triangular factorization of $A-\sigma I$, where $\sigma$
is the point of interest in the spectrum, and iterate with $\left(A-\sigma I\right)^{-1}$
instead of iterating with $A$. Under this transformation, every eigenvalue
$\lambda$ of $A$ is mapped to an eigenvalue $(\lambda-\sigma)^{-1}$
in the transformed matrix's spectrum. The eigenvalues that are closest
to $\sigma$ in $A$'s spectrum map to the outermost eigenvalues of
$(A-\sigma I)^{-1}$, which leads Lanczos to converge to those eigenvalues
first. This is called shift-and-invert Lanczos.

The drawback of Lanczos is that it is hard to use as a black-box solver.
Convergence depends on the structure of the spectrum, and there is
no way to ascertain that all of the required eigenvalues have been
obtained. Another difficulty stems from the fact that the projection
of $K_{t}$ along any eigenspace of~$A$ is one dimensional, and
therefore Lanczos cannot determine the multiplicities of the eigenvalues
of $A$. If we wish to determine multiplicities, we can use a variant
called block Lanczos, but the costs rise with the dimension of the
computed eigenspace, which limits the efficiency of this approach.
Some of these deficiencies are addressed by algorithms popular in
the electronic-structure calculations community, such as conjugate
gradients and FEAST.\cites{ovtchinnikov08,polizzi09}

\subsection{\label{subsec:bisection}Bisection}

Bisection is a recursive algorithm that computes all of the eigenvalues
that lie in a user-specified half-open interval $\left[x_{0},\;x_{1}\right)$.
Since computing an interval that envelops the entire spectrum is relatively
easy (e.g., using the bound $|\lambda|\leq\|A\|_{2}\leq\|A\|_{1}$;
see~\cite{golub13}*{Thm~7.2.1}), the requirement for a user-provided
interval does not limit the generality of the method. Bisection works
by splitting the interval into two halves, counting the eigenvalues
in each half, and continuing the search recursively in each half that
contains eigenvalues. To count the eigenvalues in an interval~$\left[x_{0},\;x_{1}\right)$,
we determine the numbers $f(x_{0})$ and $f(x_{1})$ of eigenvalues
that lie strictly to the left of $x_{0}$ and of $x_{1}$; the difference
$f(x_{1})-f(x_{0})$ is the number of eigenvalues in the interval.
The function $f(x)$ is evaluated by computing the negative index
of inertia $\nu$ of $A-xI$, 
\[
f(x)=\nu(A-xI)\,.
\]

The recursion stops when the length $x_{1}-x_{0}$ of the current
interval falls below a user-specified tolerance threshold. At this
point, the algorithm returns the midpoint of the interval as a representative
of the eigenvalues within that interval. The midpoint is duplicated
as many times as there are eigenvalues. (Duplication can represent
either tight clusters of distinct eigenvalues or multiple eigenvalues).

A pseudocode of bisection is given in Algorithm~\ref{alg:bisection}.
The code can be easily modified to compute only the eigenvalues inside
a user-specified interval or a set of eigenvalues that are specified
by their ordinals.

\begin{algorithm}
\caption{\label{alg:bisection}Computing all of the eigenvalues using bisection.}
\begin{algorithmic}[1]
\algnewcommand{\LineComment}[1]{$\triangleright$ #1}
\Function{compute-all-eigenvalues}{$A,\tau$}
    \State \LineComment Computes all of the eigenvalues of a symmetric $n$-by-$n$ matrix $A$ to accuracy $\tau\|A\|_{1}$.
    \State \LineComment Returns a vector of $n$ computed eigenvalues.
    \State $x_{0}\;\gets\;-\left\Vert A\right\Vert _{1}$
    \State $\phantom{x_{0}}\mathllap{\nu_{0}}\;\gets\;0$
    \State $\phantom{x_{0}}\mathllap{x_{1}}\;\gets\;\left\Vert A\right\Vert _{1}$
    \State $\phantom{x_{0}}\mathllap{\nu_{1}}\;\gets\;n$
    \State \textbf{return} \Call{compute-within-interval}{$A,\tau,x_{0},\nu_{0},x_{1},\nu_{1}$}
\EndFunction
\State
\Function{compute-within-interval}{$A,\tau,x_{0},\nu_{0},x_{1},\nu_{1}$}
    \State \LineComment Recursively computes the eigenvalues that lie within $[x_{0},x_{1}).$ Returns a vector of the computed eigenvalues.
    \State \LineComment The arguments $\nu_{0}$ and $\nu_{1}$ are the negative indices of inertia of $A-x_{0}I$ and $A-x_{1}I$, respectively.
    \State $x\gets0.5(x_{0}+x_{1})$
    \If{$x_{1}-x_{0}>2\tau\left\Vert A\right\Vert _{1}$}
        \State $\mu\gets\nu(A-xI)$\Comment Compute the negative index of inertia of $A-xI$.\label{pseudocode:negative-index-of-inertia}
        \If{$\mu>\nu_{0}$}\label{pseudocode:left-half-interval}
            \State $w_{0}\gets$\Call{compute-within-interval}{$A,\tau,x_{0},\nu_{0},x,\mu$}
        \EndIf
        \If{$\nu_{1}>\mu$}\label{pseudocode:right-half-interval}
            \State $w_{1}\gets$\Call{compute-within-interval}{$A,\tau,x,\mu,x_{1},\nu_{1}$}
        \EndIf
        \State \textbf{return }$[w_{0},w_{1}]$\Comment Return the concatenation of $w_{0}$ and $w_{1}$.
    \Else
        \State \textbf{return }$[x,x,\dotsc,x]$\Comment Return the value $x$ repeated $\nu_{1}-\nu_{0}$ times.
    \EndIf
\EndFunction
\end{algorithmic}
\end{algorithm}

The computational cost of bisection is as follows. Every node of the
recursion tree requires computing $\nu(A-xI)$, whose cost we denote
by $N(A)$. The number of nodes in the tree is bounded by the number
of leaves times the height of the tree; the number of leaves equals
at most the number of returned eigenvalues $k$ and the height of
the tree equals $\Theta(\log(1/\tau))$, where $\tau$ is the error
tolerance threshold. Therefore the total cost is $O(k\log(1/\tau)N(A))$.

As we explained in Section~\ref{subsec:general-purpose-eigensolvers},
we compute the eigenvalues of a dense matrix by first reducing it
orthonormally to a tridiagonal one. This allows us to compute $\nu(A-xI)$
using an $LDL^{T}$ factorization without pivoting at an arithmetic
cost of $O(n)$ operations. Using the $LDL^{T}$ factorization to
compute the inertia of a tridiagonal matrix is numerically stable
even without pivoting.\cite{demmel93} In LAPACK, this is implemented
in the subroutine STEBZ.\cites{anderson99,kahan66} This approach
gives a total arithmetic cost of $\Theta(n^{3}+k\log(1/\tau)n)$,
where the $n^{3}$ term is the cost of the tridiagonal reduction. 

When the matrix is sparse, we must either compute an $LDL^{T}$ factorization
with pivoting using a sparse symmetric-indefinite factorization algorithm
(several such algorithms exist; we address them next), or use our
new algorithm. In either case, bisection is not competitive with shift-and-invert
Lanczos for the canonical problem of accurately computing the eigenvalues
that are closest to a user-selected point of interest. For this problem,
Lanczos requires a single sparse factorization, while bisection requires
a sequence of $\Omega(\log(1/\tau))$, even when computing a single
eigenvalue. Nevertheless, bisection has two advantages that make it
potentially useful when the required accuracy is moderate. First,
bisection decouples the work that takes place within disjoint intervals
of the spectrum, making it suitable for parallel computation, and
second, bisection can locate eigenvalues by their ordinals, which
to our knowledge is not possible with other eigensolvers.

\subsection{\label{subsec:sparse-symmetric-indefinite}Sparse Symmetric-Indefinite
Factorizations}

An alternative to our sparse inertia algorithm is to use a sparse
symmetric-indefinite factorization. Such factorizations have the form
$PAP^{T}=LDL^{T}$, where $P$ is a permutation matrix and $D$ is
a block-diagonal one with 1-by-1 and 2-by-2 diagonal blocks. The inertia
of $D$ is trivial to compute because each of its diagonal blocks
corresponds to one or two easy-to-compute eigenvalues. By Sylvester's
Law of Inertia (see~\cite{golub13}*{Thm~8.1.17}), the matrices
$D$ and~$A$ have the same inertia, and therefore having $D$ allows
us to easily compute the inertia of $A$. Examples of modern symmetric-indefinite
factorization codes include MA57, HSL\_MA86, HSL\_MA97,\cite{hsl}
and the sparse symmetric-indefinite solver from the PARDISO library.\cite{schenk06}

\subsection{\label{subsec:sparse-factorizations}Sparse Factorizations and Fill
Bounds}

In the Cholesky factorization of a symmetric positive-definite matrix
$A=LL^{T}$, the fill in $L$ has a tight bound that is fully determined
by the sparsity pattern of~$A$. The fill is best described in terms
of a graph that we define as follows. The vertex set is $\{1,2,\ldots,n\}$,
representing the rows and columns of the matrix, and the edge set
contains the $(i,j)$ pairs that correspond to the nonzero elements
of~$A$. A seminal (and fairly easy) result shows that if $L_{ij}\neq0$,
then there is a path in the graph from $i$ to $j$ consisting entirely
of vertices with indices smaller than $i$ and $j$.\cite{rose76}
This is a necessary condition. It is normally also sufficient; only
exact cancellation can lead to $L_{ij}=0$ in the presence of such
a path. 

The key to minimizing the fill in Cholesky is to order the vertices
so as to break up fill-creating paths. Instead of factorizing $A$,
we compute a factorization $PAP^{T}=LL^{T}$, in which $P$ is a permutation
matrix that represents the vertex ordering. This is the idea behind
a method called nested dissection\cite{george73} (see also~\cite{davis06}*{Sec~7.6}),
which we use in our experiments below. Nested dissection generates
$P$ by finding a small set of vertices called an \emph{approximately
balanced vertex separator}, ordering that set last, and recursing
on two subgraphs. This method provides powerful guarantees on the
fill. For example, if the graph of $A$ is a $\sqrt{n}$-by-$\sqrt{n}$
finite-element mesh or a planar graph of $n$ vertices, ordering by
nested dissection guarantees that the Cholesky factor contains $O(n\log n)$
nonzero elements and the factorization requires $O(n^{3/2})$ arithmetic
operations, which is asymptotically optimal. More general optimality
bounds\cites{agrawal93,gilbert88,lipton79} for nested dissection have also been proved.\footnote{The optimality of nested dissection requires the use of nearly optimal
vertex separators. In practice, vertex separators are computed using
heuristics. These heuristics usually work, but they are not guaranteed
to find a small separator, even if one exists. Nested dissection can
fail to produce an approximately optimal ordering if the separator
heuristic fails.}

The fill in the sparse $QR$ factorization can be analyzed and controlled
using the same techniques as in Cholesky. The $R$ factor of the $QR$
factorization of $A$ is also the Cholesky factor of~$A^{T}A$, and
therefore the fill of the $R$ factor can be controlled by computing
a nested-dissection ordering for $A^{T}A$ and reordering the columns
of~$A$ accordingly. George and Ng have analyzed this and proved
that the sparse $QR$ factorization of a possibly unsymmetric or indefinite
$\sqrt{n}$-by-$\sqrt{n}$ finite-element mesh has the same asymptotic
costs as the Cholesky factorization of a symmetric positive-definite
matrix of this type.\cite{george88} Furthermore, a nested-dissection
ordering of $A^{T}A$ can be computed efficiently without computing
$A^{T}A$ or even its sparsity structure. Recent algorithms that compute
such orderings\cites{brainman02,grigori10,hu05} include hypergraph-partitioning-based nested dissection,
wide-separator nested dissection and reduction to singly bordered
block-diagonal form.\footnote{Although the reduction to singly bordered block-diagonal form is generally
not considered to be a nested-dissection algorithm, we can think of
it as a multiway dissection algorithm for ordering $A^{T}A$ that
has only one level of nesting.}

The relevance of the sparse $QR$ factorization to this paper is that
the fill in our algorithm is bounded by the fill of sparse $QR$.
Due to this, our factorization falls into the class of algorithms
in which an upper bound on the fill is established by using a fill-minimizing
algorithm before the computation begins. 

In contrast, sparse symmetric-indefinite factorization algorithms
work by ordering the matrix using a fill-minimizing ordering for the
Cholesky factorization of $A$, and then computing an $LDL^{T}$ factorization
with pivoting. Pivoting is necessary for numerical stability, but
can potentially increase fill. The PARDISO library addresses this
tradeoff using a technique called static pivoting, in which the matrix
is preprocessed to reduce the need for pivoting, and pivots are chosen
so as not to affect fill. In certain cases, badly chosen pivots can
make the factorization numerically unstable. The codes in the HSL
library support static pivoting too, but they also offer the possibility
of selecting pivots without regard to fill. This can prevent numerical
issues but can increase fill. 

\subsection{Wilkinson's Inertia Algorithm}

We base our algorithm on a method that Wilkinson proposed for computing
the inertia of general matrices,\cite{wilkinson65}*{p~236--240}
which has also been adapted for banded matrices.\cites{gupta69,gupta70,martin67,scott84}
The method is based on the Sturm sequence property, which is described
by the following theorem. The theorem states that the number of negative
eigenvalues of a matrix can be computed by forming the sequence of
the determinants of its leading principal minors and counting the
number of sign changes between consecutive elements of that sequence.
The proof of the theorem is based on the eigenvalue interlacing property
(for details, see~\cite{wilkinson65}*{p~300}).
\begin{theorem}[Sturm Sequence Property]
The number of negative eigenvalues of a symmetric matrix $A\in\mathbb{R}^{n\times n}$
equals the number of sign changes between consecutive elements of
the sequence 
\[
1,\det(A_{1}),\det(A_{2}),\dotsc,\det(A_{n})\,,
\]
 where $A_{k}=A_{1:k,1:k}$ are the leading principal submatrices
of $A$ for $k=1,2,\dotsc,n$, assuming that none of the elements
of that sequence are zero. 
\end{theorem}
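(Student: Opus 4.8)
The plan is to track the number of negative eigenvalues as we pass from each leading principal submatrix to the next, and to show that the sign pattern of the determinant sequence records exactly these increments. First I would introduce the notation $\nu_k$ for the number of negative eigenvalues of $A_k$, together with the conventions that $A_0$ is the empty matrix, $\det(A_0)=1$, and $\nu_0=0$, so that the leading $1$ of the sequence fits into the same framework as the determinants. The hypothesis that none of the determinants vanishes guarantees that every $A_k$ is nonsingular and hence has no zero eigenvalue; this is what makes the sign of each $\det(A_k)$ well defined and the count of sign changes unambiguous.

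The first step is the elementary observation that, since $\det(A_k)$ is the product of the eigenvalues of $A_k$ and none of them is zero, we have $\operatorname{sign}(\det(A_k)) = (-1)^{\nu_k}$: each negative eigenvalue flips the sign of the product while each positive one leaves it unchanged. Consequently a sign change occurs between $\det(A_{k-1})$ and $\det(A_k)$ if and only if $\nu_k$ and $\nu_{k-1}$ have opposite parities.

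The heart of the argument is to pin down how $\nu_k$ can differ from $\nu_{k-1}$, and here I would invoke the Cauchy interlacing theorem. Since $A_{k-1}$ is obtained from $A_k$ by deleting its last row and column, the eigenvalues of $A_{k-1}$ interlace those of $A_k$: writing the eigenvalues of $A_k$ in decreasing order as $\lambda_1^{(k)} \ge \cdots \ge \lambda_k^{(k)}$, we have $\lambda_i^{(k)} \ge \lambda_i^{(k-1)} \ge \lambda_{i+1}^{(k)}$ for $i = 1, \ldots, k-1$. From $\lambda_i^{(k-1)} \le \lambda_i^{(k)}$ I would deduce that every negative eigenvalue of $A_k$ in positions $1, \ldots, k-1$ forces a corresponding negative eigenvalue of $A_{k-1}$, so that $\nu_k \le \nu_{k-1} + 1$ (the only uncovered position being the last eigenvalue $\lambda_k^{(k)}$); from $\lambda_{i+1}^{(k)} \le \lambda_i^{(k-1)}$ I would deduce the reverse inequality $\nu_k \ge \nu_{k-1}$. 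Hence $\nu_k \in \{\nu_{k-1}, \nu_{k-1}+1\}$ at every stage.

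Combining the two steps finishes the proof: when $\nu_k = \nu_{k-1}$ the parities agree and there is no sign change, whereas when $\nu_k = \nu_{k-1}+1$ the parities differ and there is exactly one sign change, so the number of sign changes contributed at step $k$ equals $\nu_k - \nu_{k-1}$. Summing this telescoping expression from $k=1$ to $n$ yields $\nu_n - \nu_0 = \nu_n$ sign changes in total, which is precisely the number of negative eigenvalues of $A = A_n$, as claimed. I expect the main obstacle to be the interlacing step, specifically arguing cleanly that the inertia increases by at most one and by at least zero at each stage; once that monotonicity is in hand, the determinant-sign identity and the telescoping count are routine.
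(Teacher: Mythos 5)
Your proof is correct, and it follows exactly the route the paper indicates: the paper does not spell out a proof but states that the result rests on the eigenvalue interlacing property (citing Wilkinson, p.~300), which is precisely the Cauchy interlacing argument you use to show $\nu_k \in \{\nu_{k-1}, \nu_{k-1}+1\}$ before telescoping. The determinant-sign identity $\operatorname{sign}(\det(A_k)) = (-1)^{\nu_k}$ and the parity bookkeeping are handled cleanly, so nothing is missing.
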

We can, therefore, compute the number of negative eigenvalues by forming
the sequence of the determinants of the leading principal minors.
Wilkinson proposed to form this sequence by reducing the matrix to
upper-triangular form one row after another, and producing the determinant
of the corresponding leading principal minor as a byproduct after
processing each row. Let us describe this idea in concrete form using
matrix notation. The first $k$ steps of the algorithm reduce the
first $k$ rows of the matrix to a $k$-by-$n$ upper-trapezoidal
matrix $U^{(k)}$. The transformation that we apply to $A_{1:k,:}$
in these steps can be described in aggregate using a $k$-by-$k$
matrix $X^{(k)}$, which is not formed by the algorithm and is computed
only implicitly. Using this notation, we may write 
\[
X^{(k)}A_{1:k,:}=U^{(k)}\,.
\]
 Let us take the determinant of the leading $k$-by-$k$ principal
submatrix on both sides of this equation. This yields
\[
\det(X^{(k)}A_{1:k,1:k})=\det(U_{1:k,1:k}^{(k)})\,,
\]
 which we may write as
\[
\det(X^{(k)})\det(A_{k})=U_{11}^{(k)}U_{22}^{(k)}\dotsm U_{kk}^{(k)}\,,
\]
where $U_{11}^{(k)},U_{22}^{(k)},\dotsc,U_{kk}^{(k)}$ are the diagonal
elements of $U^{(k)}$. This gives 
\[
\det(A_{k})=\frac{U_{11}^{(k)}U_{22}^{(k)}\dotsm U_{kk}^{(k)}}{\det(X^{(k)})}\,.
\]
Computing the numerator in the above formula is trivial. To explain
how we compute $\det(X^{(k)})$, we need to explain how $U^{(k)}$
is produced from $U^{(k-1)}$.

Let the reduced matrix $A^{(k)}$ that we obtain from step $k$ be
defined as the matrix that contains the rows of the upper-trapezoidal
$U^{(k)}$, followed by the yet-unprocessed rows of $A$. For $n=6$
and $k=4$, this matrix has the form
\[
A^{(k)}=\!\!\!\begin{array}{r}
\vphantom{\begin{array}{c}
\times\\
\times\\
\times\\
\times
\end{array}}U^{(k)}\\
\vphantom{\begin{array}{c}
\times\\
\times
\end{array}}A_{(k+1):n,:}
\end{array}\!\!\!\left[\begin{array}{cccccc}
\times & \times & \times & \times & \times & \times\\
 & \times & \times & \times & \times & \times\\
 &  & \times & \times & \times & \times\\
 &  &  & \times & \times & \times\\
\cmidrule(lr){1-6}\times & \times & \times & \times & \times & \times\\
\times & \times & \times & \times & \times & \times
\end{array}\right].
\]
 To compute $U^{(k)}$, we eliminate the elements in row $k$ of $A^{(k-1)}$
one by one from left to right using a sequence of elimination operations,
stopping when we reach the diagonal element. We can choose between
two types of elimination operations: elementary stabilized transformations
and Givens rotations. An elementary stabilized transformation works
by eliminating $A_{kj}^{(k-1)}$ in two steps:
\begin{enumerate}
\item Test whether $\bigl|A_{jj}^{(k-1)}\bigr|<\bigl|A_{kj}^{(k-1)}\bigr|$,
and if so, interchange rows $k$ and $j$.
\item Subtract a multiple of row $j$ from row $k$ using $A_{kj}^{(k-1)}/A_{jj}^{(k-1)}$
as the scaling factor.
\end{enumerate}
A Givens rotation works by premultiplying rows $k$ and $j$ with
the rotation matrix 
\[
G^{(k,j)}=\frac{1}{\sqrt{\bigl(A_{jj}^{(k-1)}\bigr)^{2}+\bigl(A_{kj}^{(k-1)}\bigr)^{2}}}\left[\begin{array}{cc}
A_{jj}^{(k-1)} & A_{kj}^{(k-1)}\\
-A_{kj}^{(k-1)} & A_{jj}^{(k-1)}
\end{array}\right].
\]

Having completed the $k-1$ elimination operations (if $A$ is sparse,
fewer than $k-1$ may be necessary), we can now compute $\det(A_{k}).$
If we use Givens rotations, $\det(X^{(k)})=1$. The determinant of
a rotation matrix always equals $1$, the determinant of a product
of rotations is therefore also 1, and so $\det(X^{(k)})=1$. If we
use elementary stabilized transformations, $\det(X^{(k)})=\pm1$,
and resolving the sign is straightforward. In every elimination step,
we perform up to two operations: an interchange of a pair of rows,
which corresponds to a permutation matrix with determinant $-1$,
and a subtraction of a multiple of one row from a subsequent row,
which corresponds to a unit lower-triangular matrix whose determinant
is 1. Therefore, when using elementary stabilized transformations,
$\det(X^{(k)})$ is either $1$ or $-1$, depending on whether the
number of row interchanges that we have made is even or odd.

For completeness, we list a pseudocode of the variant of the algorithm
that uses elementary stabilized transformations in Algorithm~\ref{alg:pseudocode-dense}.
In the pseudocode, we simplified the algorithm by counting the number
of sign changes in the determinant sequence without actually computing
the determinants.

\begin{algorithm}
\caption{\label{alg:pseudocode-dense}Computing the number of negative eigenvalues using elementary stabilized transformations.}
\begin{algorithmic}[1]
\State \textbf{Input: }a symmetric matrix $A\in\mathbb{R}^{n\times n}$.
\State \textbf{Output: }the number $\nu$ of negative eigenvalues of $A$.
\State
\State $\nu\gets0$
\For{$i\gets1,2,\dotsc,n$}
    \State $x\gets0$ \Comment number of row interchanges and diagonal sign changes
    \For{$j\gets1,2,\dotsc,i-1$}
        \If{$\left|A_{jj}\right|<\left|A_{ij}\right|$}
            \State $A_{j,:}\leftrightarrow A_{i,:}$
            \State $x\gets x+1$
            \If{$\operatorname{sign}(A_{jj})\neq\operatorname{sign}(A_{ij})$}
                \State $x\gets x+1$
            \EndIf
        \EndIf
        \State $A_{i,:}\gets A_{i,:}-\frac{A_{ij}}{A_{jj}}A_{j,:}$
    \EndFor
    \If{$A_{ii}<0$}
        \State $x\gets x+1$
    \EndIf
    \If{$x\bmod2=1$}
        \State $\nu\gets\nu+1$
    \EndIf
\EndFor
\end{algorithmic}
\end{algorithm}

\section{\label{sec:algorithm}The New Sparse Inertia Algorithm}

\subsection{Bounds on Fill and Arithmetic Cost}

In the Givens rotations-based variant of the factorization, the computation
is identical to that of George and Heath's sparse $QR$ factorization,
which factors the matrix row by row.\cite{george80} The fill in
this case is obviously identical to that of sparse $QR$, and the
number of arithmetic operations is the same as in George and Heath's
algorithm. 

If we use elementary stabilized transformations in Wilkinson's algorithm,
the fill that the algorithm produces is bounded by the fill in George
and Heath's $QR$ factorization. 
\begin{definition}
Element $A_{ij}^{(k)}$ in position $i,j$ of the $k$th reduced matrix
in some factorization algorithm is called a \emph{structural nonzero
}if it is different from zero whenever the algorithm is carried out
in an arithmetic that satisfies the following for all scalars $x$
and $y$:
\end{definition}
\begin{enumerate}
\item If $x+y=0$ or $x-y=0$, then $x=y=0$.
\item $0x=0$.
\end{enumerate}
\begin{theorem}
If $\smash{A_{ij}^{(k)}}$ is a structural nonzero in the variant
of Wilkinson's algorithm that uses elementary stabilized transformations,
then $\smash{A_{ij}^{(k)}}$ is also a structural nonzero in the variant
that uses Givens rotations and in George and Heath's $QR$ factorization.
\end{theorem}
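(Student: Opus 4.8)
The plan is to prove the stronger statement that, throughout the execution of the two algorithms, the set of structural nonzeros in each row of the stabilized reduced matrix is contained in the set of structural nonzeros in the corresponding row of the Givens reduced matrix; the theorem then follows by reading off this containment at position $(i,j)$ of the $k$th reduced matrix. Because the paper has already observed that the Givens variant of Wilkinson's algorithm performs exactly the same computation as George and Heath's sparse $QR$ factorization, it suffices to compare the stabilized variant against the Givens variant alone. I would write $\sigma^{\mathrm{s}}(r)$ and $\sigma^{\mathrm{g}}(r)$ for the structural supports (the sets of column indices that are structural nonzeros) of the physical row in position $r$ in the two algorithms, and prove by induction over the elimination steps that $\sigma^{\mathrm{s}}(r)\subseteq\sigma^{\mathrm{g}}(r)$ for every row index $r$ at every corresponding point in the two executions.

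The core of the argument is a single-step comparison lemma. Both algorithms process the rows in the order $1,2,\dotsc,n$ and, within the processing of row $i$, eliminate the subdiagonal entries of row $i$ from left to right, each elimination acting only on the pair of rows in positions $i$ and $j$ and zeroing the entry in column $j$ of row $i$. I would first record that a Givens rotation of rows $i$ and $j$ replaces the support of each of the two rows by their union $U=\sigma^{\mathrm{g}}(i)\cup\sigma^{\mathrm{g}}(j)$ (with column $j$ deleted from row $i$), while an elementary stabilized transformation produces, for each of the two rows, a support contained in $\sigma^{\mathrm{s}}(i)\cup\sigma^{\mathrm{s}}(j)$ (again with column $j$ deleted from row $i$): in the no-interchange case row $j$ is left untouched and row $i$ receives the union, and in the interchange case row $j$ receives the old support of row $i$ and row $i$ receives the union. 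In every case, assuming $\sigma^{\mathrm{s}}(i)\subseteq\sigma^{\mathrm{g}}(i)$ and $\sigma^{\mathrm{s}}(j)\subseteq\sigma^{\mathrm{g}}(j)$ hold before the step, monotonicity of the union under inclusion gives the same two containments after the step. The no-cancellation conditions of the structural-nonzero model are exactly what justify this union propagation: the rule $0x=0$ prevents a zero row entry from creating fill, while the rule that a sum or difference vanishes only when both operands vanish guarantees that no entry other than the deliberately eliminated one cancels, so the symbolic support coincides with the set of structural nonzeros.

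I would then assemble these per-step containments into the global induction, where two bookkeeping points need care. First, the interchanges performed by the stabilized variant permute only rows whose index is at most the current row $i$, so they never disturb the still-unprocessed rows $i+1,\dotsc,n$, which remain identical to the original rows of $A$ in both algorithms; this keeps the row-by-row correspondence well defined and confines the whole comparison to the trapezoidal part $U^{(k)}$. Second, an entry that is structurally zero is simply skipped, so the set of elimination steps actually carried out by the stabilized variant is a subset of those carried out by the Givens variant, and any fill created within row $i$ while it is being processed (which may turn later columns of row $i$ into new targets) stays inside the corresponding Givens fill by the same per-step bound applied left to right.

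The step I expect to be the main obstacle is the treatment of the interchanges, because the interchange decision depends on the actual magnitudes $|A_{jj}|$ and $|A_{ij}|$, a quantity that the magnitude-free no-cancellation model does not determine. The resolution is that the single-step lemma bounds the resulting support by the union in \emph{both} branches simultaneously; hence the containment $\sigma^{\mathrm{s}}(r)\subseteq\sigma^{\mathrm{g}}(r)$ is preserved no matter which interchange decisions the stabilized variant happens to make. In other words, the fill bound holds uniformly over all pivoting choices, which is precisely what a structural (worst-case) statement requires, and this is the observation that makes the induction close.
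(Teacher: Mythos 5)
Your proposal is correct and takes essentially the same approach as the paper's own proof: an induction over elimination steps whose core is the one-step structural comparison in which an elementary stabilized transformation leaves row $j$'s support unchanged (or swaps it with row $i$'s) and gives row $i$ the union, while a Givens rotation gives both rows the union, so the Givens support dominates in every branch; the identification of the Givens variant with George and Heath's $QR$ is likewise how the paper disposes of that part of the claim. Your write-up is simply more explicit about bookkeeping the paper leaves implicit (the stabilized variant performing a subset of the eliminations, and the containment holding uniformly over all interchange decisions).
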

\begin{proof}
We prove the result by induction on $k$. Clearly, the statement is
true for~$k=1$, because no transformations have taken place. In
step~$k$, when an elementary stabilized transformation eliminates
$A_{kj}^{(k-1)}$, it either leaves the structure of row $j$ unchanged,
or it replaces that structure with the one of row $k$ if an interchange
of rows is necessary. The structure of row $k$ is then replaced with
the union of the two row structures, with the exception of element
$(k,j)$, which becomes zero. In a Givens rotation, the structure
of both rows is replaced with their union, again with the exception
of $(k,j)$. Therefore the sparsity structure that we obtain from
a Givens rotation serves as an upper bound on the one that we obtain
from an elementary stabilized transformation. Figure~\ref{fig:fill-bound}
illustrates the two types of elimination steps. 
\end{proof}
\begin{figure}
\begin{centering}
\subfloat[Elementary stabilized transformation]{\includegraphics{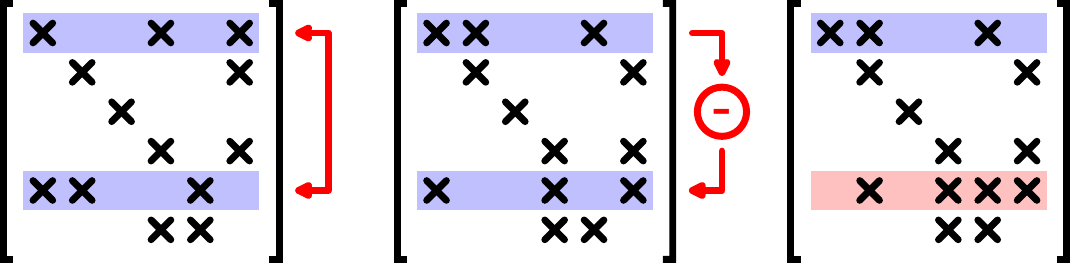}

}
\par\end{centering}
\begin{centering}
\vspace{1ex}
\subfloat[Givens rotation]{\includegraphics{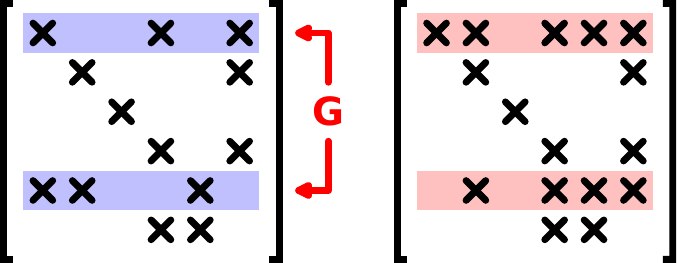}

}
\par\end{centering}
\caption{\label{fig:fill-bound}A comparison of the effect that an elementary
stabilized transformation has on the sparsity structure to the effect
of a Givens rotation. Here, $k=5$, $j=1$, and the elementary stabilized
transformation exchanges rows $k$ and $j$ before eliminating element
$(k,j)$.}
\end{figure}

The stabilized-elementary variant performs fewer arithmetic operations
than the $QR$ variant. In the dense setting, elementary stabilized
transformations require three times fewer arithmetic operations (because
each elementary transformation is cheaper to perform than the corresponding
Givens rotation). When the matrix is sparse, the effect can be much
more dramatic, because fill that is not created in the lower triangle
does not need to be eliminated (there are fewer transformations to
apply) and because sparser rows mean that each transformation is cheaper.

We acknowledge that operation counts are not good predictors of running
times on modern computers. To achieve fast running times, it is essential
to exploit parallelism and to reduce communication. In sparse factorization,
these optimizations are usually performed by exploiting elimination-tree
parallelism,\cite{jess82} and by invoking dense subroutines on
submatrices that are full or almost full (supernodes\cite{liu93}).
Our implementation of the algorithm does not currently include such
optimizations; implementing them is a significant research/engineering
effort which we have not undertaken (see, e.g.,~\cites{chen08,duff04,gupta00,hogg10,hogg11,li05}).
Nevertheless, our implementation does demonstrate the sparsity of
the algorithm and suggests that a high-performance variant of this
algorithm can achieve performance comparable to that of other modern
sparse codes.

\subsection{A Sparse Implementation}

Our implementation of the algorithm follows the outline of Algorithm~\ref{alg:pseudocode-dense},
modified so that we store and operate only on the nonzero elements
of the matrix. We represent the matrix using a data structure that
we call \emph{expandable compressed sparse row} (ECSR), which is a
modification of the standard compressed sparse row (CSR) layout.\cite{davis06}*{p~8}
ECSR exploits the fact that the fill in our algorithm is bounded by
the fill in sparse $QR$, and that the number of structural nonzeros
in each row in the $R$ factor can be computed in almost linear time.\cite{gilbert94}
Our code uses the function \texttt{rowcolcounts} of the CHOLMOD library
to compute the row-wise nonzero counts.\cite{chen08}

ECSR stores the nonzero elements and their column indices one row
after another. The amount of space that we allocate to row $i$ is
equal to the computed number of nonzeros in $R_{i,:}$, the $i$th
row in the matrix's $R$ factor (if $A_{i,:}$ is denser than $R_{i,:}$,
we allocate enough space for $A_{i,:}$). This guarantees that the
space allocated to row $i$ is sufficient to store row $i$ of $R$
and of all the intermediate upper-trapezoidal matrices. The representation
consists of the following four vectors:
\begin{description}
\item [{\texttt{val}}] numerical values of the nonzeros (in all the rows)
\item [{\texttt{col}}] column indices of the nonzeros (in all the rows)
\item [{\texttt{head}}] pointers into \texttt{val }and \texttt{col} indicating
where each row of the matrix starts
\item [{\texttt{tail}}] pointers indicating where each row ends.
\end{description}
The vacant space of row $i$ is stored in positions \texttt{tail{[}i{]}}
through \texttt{head{[}i + 1{]}}. 

We use one of two additional data structures to perform row eliminations.
In the Givens variant of the algorithm, we use a two-row Sparse Accumulator
(SPA),\cite{gilbert92} and in the elementary-stabilized variant,
we use an Ordered Sparse Accumulator (OSPA).\cite{gilbert99} A
multiple-row SPA represents a set of $m$ sparse rows with a shared
sparsity structure using an $m$-by-$n$ matrix \texttt{spa.val},
which holds the numerical values of the nonzeros, a length-$n$ vector
\texttt{spa.occupied} whose elements indicate which columns of \texttt{spa.val}
are nonzero, and also \texttt{spa.col}, a list of the column indices
of the nonzeros. The OSPA is similar, except that \texttt{spa.col}
is stored in a heap data structure,\cite{cormen09}*{Sec~6} which
provides cheap access to the leftmost nonzero column.

In the Givens variant of the algorithm, we use the SPA to perform
Givens rotations as follows.
\begin{enumerate}
\item We copy the two rows into the SPA one by one. We start with the nonzero
elements of the first row, updating the corresponding elements of
\texttt{spa.val} and \texttt{spa.occupied} and adding the column indices
of the nonzeros into \texttt{spa.col}. Next we copy the second row
in the same way, except that now we use \texttt{spa.occupied} to avoid
duplicating column indices in \texttt{spa.col}.
\item We apply the rotation to each nonzero column, using the column indices
in \texttt{spa.col} to enumerate the nonzero columns.
\item Finally, we copy the nonzero elements back into the ECSR data structure
and restore the SPA to its initial state, using \texttt{spa.col} to
enumerate elements of \texttt{spa.occupied} and \texttt{spa.val} that
must be cleared.
\end{enumerate}
In this implementation, the total amount of work is proportional to
the number of floating-point operations that the Givens rotation performs.

The elementary-stabilized variant is more complex. We implemented
it using several subroutines that operate on the ECSR data structure
and on a one-row~OSPA:
\begin{description}
\item [{\texttt{load}}] Loads a row of an ECSR matrix into an empty OSPA.
This is similar to loading a row into an unordered SPA, but requires
a \texttt{build-heap} operation\cite{cormen09}*{Section~6.3} on
the array of column indices.
\item [{\texttt{store}}] Copies the contents of an OSPA back into a row
of an ECSR matrix.
\item [{\texttt{retrieve-head}}] Returns the position and the numerical
value of the leftmost nonzero element of an OSPA (but leaves it in
the OSPA). If the OSPA is empty, returns a special indicator, which
we denote $\bot$.
\item [{\texttt{remove-head}}] Removes the leftmost nonzero element from
an OSPA.
\item [{\texttt{subtract}}] Subtracts a multiple of a row of an ECSR matrix
from an OSPA. This is a routine operation on a SPA; on an OSPA, columns
need to be inserted into the column array using a \texttt{heap-insert}
operation.
\item [{\texttt{swap}}] Exchanges the content of an OSPA with a row of
an ECSR matrix.
\end{description}
Given these building blocks, the overall algorithm can be composed
as shown in Algorithm~\ref{alg:pseudocode-sparse}.

\begin{algorithm}
\caption{\label{alg:pseudocode-sparse}The sparse stabilized-elementary transformations variant of the algorithm.}
\begin{algorithmic}[1]
\State \textbf{Input: }a symmetric ECSR matrix $A\in\mathbb{R}^{n\times n}$.
\State \textbf{Output: }the number $\nu$ of negative eigenvalues of $A$.
\State
\State $\nu\gets0$
\State $s\gets\text{empty OSPA of length }n$
\For{$i\gets1,2,\dotsc,n$}
    \State $x\gets0$
    \State \texttt{load(}$s$, $A_{i,:}$\texttt{)}
    \State $\left(j,A_{ij}\right)\gets\texttt{retrieve-head(}s\texttt{)}$
    \While{$j\neq\bot$ and $j<i$}
        \If{$\left|A_{jj}\right|<\left|A_{ij}\right|$}
            \State \texttt{swap(}$s$, $A_{j,:}$\texttt{)}
            \State $x\gets x+1$
            \If{$\operatorname{sign}(A_{jj})\neq\operatorname{sign}(A_{ij})$}
                \State $x\gets x+1$
            \EndIf
        \EndIf
        \State \texttt{subtract(}$s$, $\left(A_{ij}/A_{jj}\right)\cdot A_{j,:}$\texttt{)}
        \State \texttt{remove-head(}$s$\texttt{)}
        \State $\left(j,A_{ij}\right)\gets\texttt{retrieve-head(}s\texttt{)}$
    \EndWhile
    \If{$A_{ii}<0$}
        \State $x\gets x+1$
    \EndIf
    \If{$x\bmod2=1$}
        \State $\nu\gets\nu+1$
    \EndIf
    \State \texttt{store(}$s$, $A_{i,:}$\texttt{)}
\EndFor
\end{algorithmic}
\end{algorithm}

\section{\label{sec:numerical-analysis}Numerical Analysis}

Next, we describe the numerical behavior of the algorithm. The core
numerical issue is explained in Wilkinson's book.\cite{wilkinson65}*{p~312--315}
We repeat this explanation below, for completeness, and also provide
an example that shows how this numerical issue manifests itself.

The Givens variant of the algorithm is a thoroughly studied $QR$
factorization scheme and is known to compute backward-stable factors
(see~\cite{higham02}*{Sec~19}). The elementary-stabilized version
is not as well known, but it has also been analyzed in the literature.
It has been called \emph{pairwise pivoting} and was analyzed by Sorensen.\cite{sorensen85}
Sorensen's analysis proves that the normwise backward error of the
corresponding factorization is bounded by a product of four numbers:
the magnitude of the elements of $A$, the unit roundoff $u$ ($2^{-53}\approx1.1\times10^{-16}$
in double-precision IEEE 754), and two growth factors. One of these
growth factors represents the norm of the computed upper-triangular
factor, and the other represents the norm of the implicit factor that
is the equivalent of the $L$ factor in the $LU$ factorization with
partial pivoting. In contrast with $LU$ with partial pivoting, this
implicit factor is not lower triangular, and its elements are not
bounded by~1 in magnitude. Sorensen proves in his paper that the
two growth factors are bounded by $2^{n-1}-1$ and $2^{n-1}$, respectively.
He also notes that although these bounds are large, they are highly
pessimistic, and that large growth factors have not been encountered
in practice. Further experimental evidence of this was provided by
Grigori, Demmel and Xiang.\cite{grigori11}*{Fig~4.2}

\subsection{An Obstacle to a Backward-Stability Analysis}

The backward stability of the $QR$ factorization and of pairwise
pivoting implies that the factors that we obtain from our algorithm
are backward stable. This applies to the final factors, but also to
the intermediate ones that we obtain in each step. Stability can be
hampered if the factorization suffers from growth, but large growth
is rare. Because the algorithm uses the factors to compute the signs
of the determinant sequence
\[
1,\det(A_{1}),\det(A_{2}),\dotsc,\det(A_{n})\,,
\]
 the factors' backward stability implies that each sign is backward
stable. However, this does not imply the backward stability of the
computed inertia. This is because each intermediate factor is modified
in the steps that follow the step in which it is obtained, and therefore
each sign of the determinant sequence has its own individual backward-error
matrix (that equals the difference between $A$ and a matrix $\tilde{A}$
for which the factor is exact). 

There is no reason to assume that if each factor is the exact factor
of a matrix close to $A$, then they are all the factors of \emph{one}
matrix close to $A$, which would imply backward stability of the
determinant sequence. Indeed, we can show that the algorithm is sometimes
not backward stable.

\subsection{An Example of Instability}

Next, we show an example of how the algorithm can fail. We provide
the transcript of a Matlab session, to make the example concrete.

We let $A$ be a square, symmetric $n$-by-$n$ matrix of the form
\[
A = \,
        \begin{array}{c}
            \raisebox{1pt}{$\scriptstyle n/2$} \\
            \raisebox{1pt}{$\scriptstyle n/2$}
        \end{array}
        \!\left[
        \begin{array}{cc}
            \smashsmash{\overset{\text{\Large\strut}\raisebox{1pt}{$\scriptstyle n/2$}}{\vphantom{Z^T} X}} &
            \smashsmash{\overset{\text{\Large\strut}\raisebox{1pt}{$\scriptstyle n/2$}}{Z^T}} \\
            Z & 0 \\
        \end{array}
        \right]
\]
The leading diagonal block has the form 
\[
X=Q\,\diag(1,\epsilon_{1},\epsilon_{2},\dotsc,\epsilon_{n/2-1})\;Q^{T}\,,
\]
where $Q$ is a random orthogonal matrix, and $\epsilon_{1},\epsilon_{2},\dotsc,\epsilon_{n/2-1}$
are random, independent, and normally distributed scalars with mean
0 and a standard deviation that equals the unit roundoff. The notation
$\diag(\,\cdot\,)$ indicates a diagonal matrix with the specified
diagonal elements. The block $Z$ is defined so that its elements
are random, independent, and normally distributed with mean 0 and
standard deviation 1. 

In Matlab, we construct this matrix as follows:
\begin{lyxcode}
>\textcompwordmark{}>~n~=~2048;

>\textcompwordmark{}>~Q~=~orth(randn(n~/~2));

>\textcompwordmark{}>~d~=~{[}1;~eps~{*}~randn(n~/~2~-~1,~1){]};

>\textcompwordmark{}>~X~=~Q~{*}~diag(d)~{*}~Q';

>\textcompwordmark{}>~Z~=~randn(n~/~2);

>\textcompwordmark{}>~A~=~{[}X~Z';~Z~zeros(n~/~2){]};~
\end{lyxcode}
Next, we compute $A$'s eigenvalues by running the Matlab command
\texttt{eig}. Using these eigenvalues, we compute the matrix's condition
number and its negative index of inertia:
\begin{lyxcode}
>\textcompwordmark{}>~ev~=~eig(A);

>\textcompwordmark{}>~kappa~=~max(abs(ev))~/~min(abs(ev))

kappa~=~3.4498e+03

~

>\textcompwordmark{}>~nu~=~sum(ev~<~0)

nu~=~1024
\end{lyxcode}
In this example, the \texttt{eig} command calls LAPACK's symmetric
eigensolver,\cite{anderson99} which is backward stable. Its backward
stability guarantees that the computed eigenvalues, condition number,
and negative index of inertia are those of some matrix~$\tilde{A}$
close to $A$. As we see above, the condition number is small, and
so all of the matrices in the vicinity of $\tilde{A}$, including
$A$ itself, have eigenvalues with the same signs as~$\tilde{A}$.
This means that all such matrices have the same negative index of
inertia, namely~$1{,}024$.

The inertia algorithm produces a different result:
\begin{lyxcode}
>\textcompwordmark{}>~our\_nu~=~inertia(A)

our\_nu~=~1026
\end{lyxcode}
This shows that the algorithm is not backward stable. Any backward-stable
algorithm would return the negative index of inertia of a matrix in
the vicinity of $A$ and $\tilde{A}$, which must be $1{,}024$.

The result is similar regardless of whether we use the Givens variant
of the algorithm or the elementary-stabilized one. In particular,
because growth is not possible in the Givens variant, this shows that
the instability is not caused by growth.

\subsection{Instability Is Caused by Near Singularity}

The cause of the instability is singularity of the leading principal
minors. In our~$A$, almost all of the leading principal submatrices
have a condition number of order~$u^{-1}$, where $u$ is the unit
roundoff. The only exceptions are the three submatrices of orders
1, $n-1$ and~$n$. Because the other $n-3$ submatrices are nearly
singular, the computed signs of their determinants are often incorrect,
even when computed using a backward-stable algorithm, and therefore
counting the number of sign changes in the determinant sequence yields
an incorrect result.

\subsection{The (Limited) Effect of the Instability on Bisection}

In the experiments that we describe in the next section, the eigenvalues
that we produce using bisection are usually as accurate as those that
we produce using a backward-stable eigensolver. The instability of
the inertia algorithm does not appear to have an effect on bisection.

The reason for this is that the instability manifests itself only
when $A-xI$ has a leading principal submatrix that is nearly singular.
To make one of these submatrices nearly singular, $x$ must be at
a distance of $O(u)\Vert A\Vert$ from an eigenvalue of such a submatrix.
This can happen under two scenarios. First, as bisection converges
to an eigenvalue of $A$, the point $x$ moves progressively closer
to that eigenvalue until it reaches within $O(u)\Vert A\Vert$. This
scenario does not create a problem, because when $x$ is close to
an eigenvalue of $A$, an error in the computed inertia cannot prevent
bisection from converging.\cite{wilkinson65}*{p~302--305} The
other scenario is that $x$ is close to an eigenvalue of a leading
principal submatrix of $A$, and that eigenvalue is far from every
eigenvalue of $A$. This can lead bisection to fail. However, such
unfortunate positioning of $x$ appears to happen only in pathological
cases, meaning that the instability has few opportunities to have
an effect.

\section{\label{sec:numerical-experiments}Numerical Experiments}

We carried out extensive numerical experiments to test the accuracy
of the algorithm. In these experiments, we used our inertia algorithm
as part of a bisection eigensolver to compute the eigenvalues of a
variety of matrices. Although in practice bisection is often not a
competitive algorithm, it requires a large number of inertia computations,
and this makes it a convenient platform to stress-test our algorithm.

We computed the eigenvalues of two families of matrices. The first
family is one that we produced using the LAPACK subroutine LATMS,
which generates random dense matrices for testing LAPACK subroutines.
These matrices have the form $A=Q\Lambda Q^{T}$, where $Q$ is a
random orthogonal matrix and $\Lambda$ is a diagonal matrix whose
diagonal elements $\lambda_{1},\lambda_{2},\dotsc,\lambda_{n}$ are
the eigenvalues of $A$. The eigenvalues are randomly distributed
according to two parameters named \texttt{mode} and $\kappa$. There
are six possible modes, in the first five of which the eigenvalues
have the form $\lambda_{i}=s_{i}\sigma_{i}$, where the scalars $s_{i}$
take the values $\pm1$ independently with equal probability, and
the scalars $\sigma_{i}$ have a distribution that depends on the
mode. In all of the distributions that correspond to the first five
modes, the $\sigma_{i}$ are nonnegative, and therefore they are the
singular values of $A$. The distributions are the following:
\begin{enumerate}
\item The first singular value is equal to~1, and all others are equal
to $1/\kappa$.
\item The first $n-1$ singular values are equal to~1, and $\sigma_{n}=1/\kappa$.
\item The singular values are evenly distributed between~1 and $1/\kappa$,
on a logarithmic scale.
\item The singular values are evenly distributed between~1 and $1/\kappa$,
on a linear scale.
\item The singular values have the form $\left(1/\kappa\right)^{e}$, where
$e$ is drawn independently for each singular value from the uniform
distribution on the interval $\left(0,1\right)$.
\end{enumerate}
Finally, in mode 6, LATMS generates the $\lambda_{i}$ directly without
generating the singular values first. In this mode, the $\lambda_{i}$
are independent and normally distributed with mean~0 and standard
deviation~1.

We generated one matrix of order $n=256$ for each combination of
\texttt{mode} and~$\kappa$, for various values of $\kappa$, and
computed all of the eigenvalues of that matrix. We then measured their
accuracy using the maximal normalized error, 
\begin{equation}
\max_{i=1,2,\dotsc,n}|\lambda_{i}-\hat{\lambda}_{i}|/\Vert A\Vert_{1}\,,\label{eq:eigenvalue-error}
\end{equation}
where the $\lambda_{i}$ are the actual eigenvalues used by LATMS
and the $\hat{\lambda}_{i}$ are the approximate ones produced by
our bisection eigensolver. The full results are shown in Table~\ref{tab:experiments-latms}.
All of the errors are within one order of magnitude from $u\approx1.1\times10^{-16}$.

\begin{table}%
\caption{\label{tab:experiments-latms}Accuracy of the computed eigenvalues of random dense matrices, generated using the LAPACK subroutine~LATMS.}%
\centering \rndtable%
\end{table}%

The second family of matrices in our experiments are sparse matrices
from the SuiteSparse Matrix Collection.\cite{davis11b} We first
computed the inertia of all~283 real symmetric matrices of order
$64<n<16{,}384$ in this collection whose elements have known numerical
values. Each inertia computation was carried out on a single core
of an Intel i7-2600 CPU, with our code compiled using version~13.1.1
of the Intel compiler suite. We then used this computation as follows
to winnow down our list of matrices. The analysis of bisection in
Section~\ref{subsec:bisection} indicates that the time required
to compute all of the eigenvalues of $A$ to double-precision accuracy
is at most $-\log(u)\,n\,T(A)$, where~$u$ is double-precision unit
roundoff, $n$ is the order of $A$, and $T(A)$ is the time required
to compute $A$'s inertia once. We selected all of the matrices for
which this estimate was at most 2 hours, and ran bisection to compute
all of the eigenvalues of each such matrix. In total, we ran bisection
on 116 matrices, described below in Table~\ref{tab:ufl-matrices}.

As in the previous experiment, we computed the eigenvalues of each
of the~116 matrices, and measured the maximal normalized error~(\ref{eq:eigenvalue-error})
relative to backward-stable eigenvalues computed using the LAPACK
subroutine SYEV. The results are shown in a histogram plot in Figure~\ref{fig:experiments-ufl}.
The errors are again comparable to $u$, ranging between $8.0\times10^{-16}$
and $3.5\times10^{-14}$, with a median of $3.5\times10^{-15}$.

\begin{table}%
\caption{\label{tab:ufl-matrices}Statistical parameters of the set of SuiteSparse matrices.}%
\centering \uflstats%
\end{table}

\begin{figure}
\begin{centering}
\includegraphics{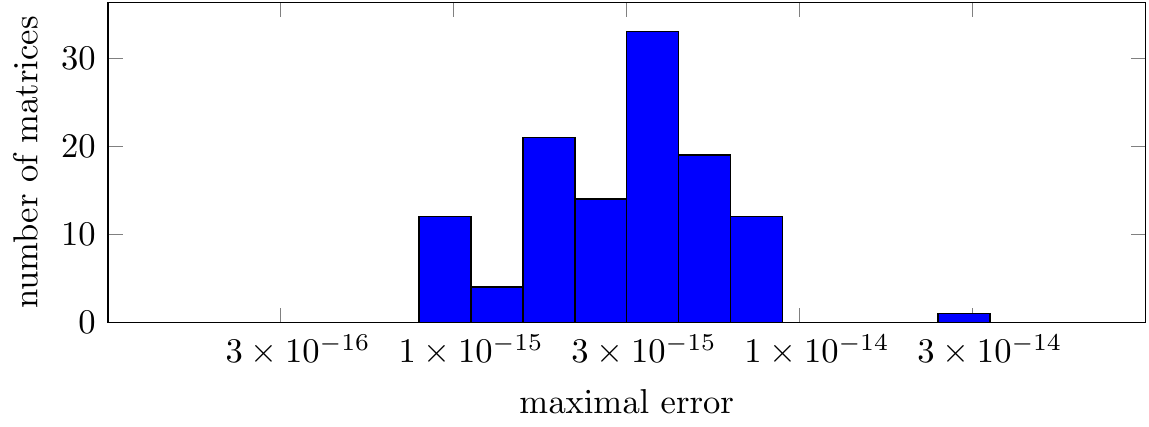}
\par\end{centering}
\caption{\label{fig:experiments-ufl}Accuracy of the computed eigenvalues of
sparse matrices from the SuiteSparse collection.}
\end{figure}

\section{\label{sec:performance-experiments}Performance Experiments}

In this section, we describe the experiments that we carried out to
study the performance of our sparse inertia algorithm. We analyzed
the performance of the two variants of our algorithm, and we compared
it with the performance of two other factorization algorithms. The
first algorithm that we compared ourselves with is SuiteSparseQR,
or SPQR for short.\cite{davis11a} It is a state-of-the-art sparse
$QR$ factorization algorithm, and it is the algorithm that runs when
we apply Matlab's \texttt{qr} command to a sparse matrix. Our interest
in SPQR stems from the fact that it computes the same factorization
that we compute in the Givens variant of our algorithm. Comparing
ourselves with SPQR helps us to learn how much performance we can
gain by adopting the advanced techniques that are implemented in SPQR,
such as the use of supernodes.

The second algorithm with which we compared ourselves is the algorithm
MA57 from the HSL library.\cites{duff04,hsl} It is an industrial-quality
sparse-fac\-tor\-i\-za\-tion algorithm for symmetric-indefinite
matrices. An important distinction between SPQR and MA57 is that the
former does not compute the inertia, while the latter does. In these
experiments, MA57 represents the alternative to our approach to the
problem of computing the inertia. A comparison with MA57 allows us
to gain insight into the relative strengths and weaknesses of our
algorithm.

We carried out two experiments on a single core of an Intel Xeon E5-2650
v3 CPU, using gcc and gfortran version 6.2.0, Intel MKL version 11.3.1,
SuiteSparse version 4.5.3, and MA57 version 3.8.0. We use the default
settings for all algorithms. In particular, this means that MA57 pre-scales
the matrix so as to reduce the need for pivoting, which is something
our algorithm does not do.

Our first experiment was carried out on a set of 81 matrices from
the SuiteSparse Matrix Collection. The matrices were chosen by selecting
all of the symmetric, real, indefinite matrices of order $32{,}768\leq n\leq65{,}536$.
For each matrix, we computed a nested-dissection ordering for $A$
using METIS, and then applied each of the algorithms to the reordered
matrix. We discarded~21 matrices for which at least one factorization
took more than one hour or required more than 16 GB of memory; this
left us with a set of 60 matrices. 

Figure~\ref{fig:performance-givens-vs-spqr} compares the performance
of the Givens variant of our algorithm to that of SPQR. We find that
the required number of arithmetic operations (flops) in the two algorithms
is within a factor of 10 of each other, with no significant advantage
in favor of either one. The reason for the variation in flops is because
the two algorithms process the rows in different orders, and the order
of the rows can have a dramatic effect on flops. This has already
been noted by George and Heath.\cite{george80}*{p~78} The computational
rate of SPQR is however much higher, ranging between 1.1 and 10 gflop/s,
whereas that of our algorithm ranges between 151 and 874 mflop/s.
This also translates to a dramatic difference in running times in
favor of SPQR.

\begin{figure}
\noindent \centering{}\includegraphics{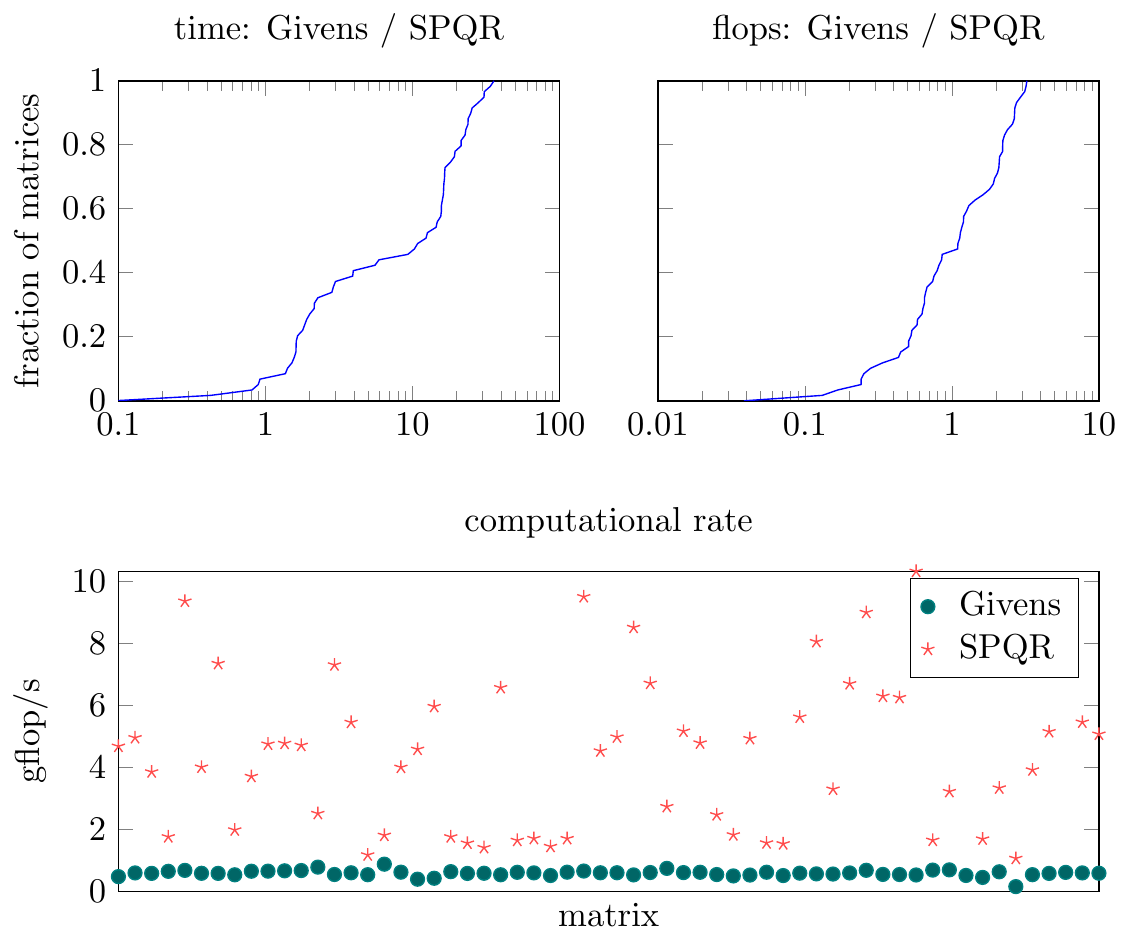}\caption{\label{fig:performance-givens-vs-spqr}A comparison of the Givens
variant of our algorithm with SPQR. The top plots show two empirical
cumulative-distribution-function curves of the ratio of the running
time of our algorithm to that of SPQR (left) and the ratio of the
numbers of arithmetic operations of the two algorithms (right). The
bottom plot shows the computational rates of the two algorithms.}
\end{figure}

Figure~\ref{fig:performance-elementary-vs-givens} shows that the
elementary-stabilized variant of the algorithm is much more efficient
than the Givens variant. The elementary-stabilized variant performs
a factor of 0.001 to 0.13 of the flops that the Givens variant performs.
The computational rates of the two codes are comparable, with some
outliers that we ascribe to the cost of row interchanges in the elementary-stabilized
case. The elementary-stabilized version is therefore faster by a significant
factor, between 3.14 and 269.

\begin{figure}
\noindent \centering{}\includegraphics{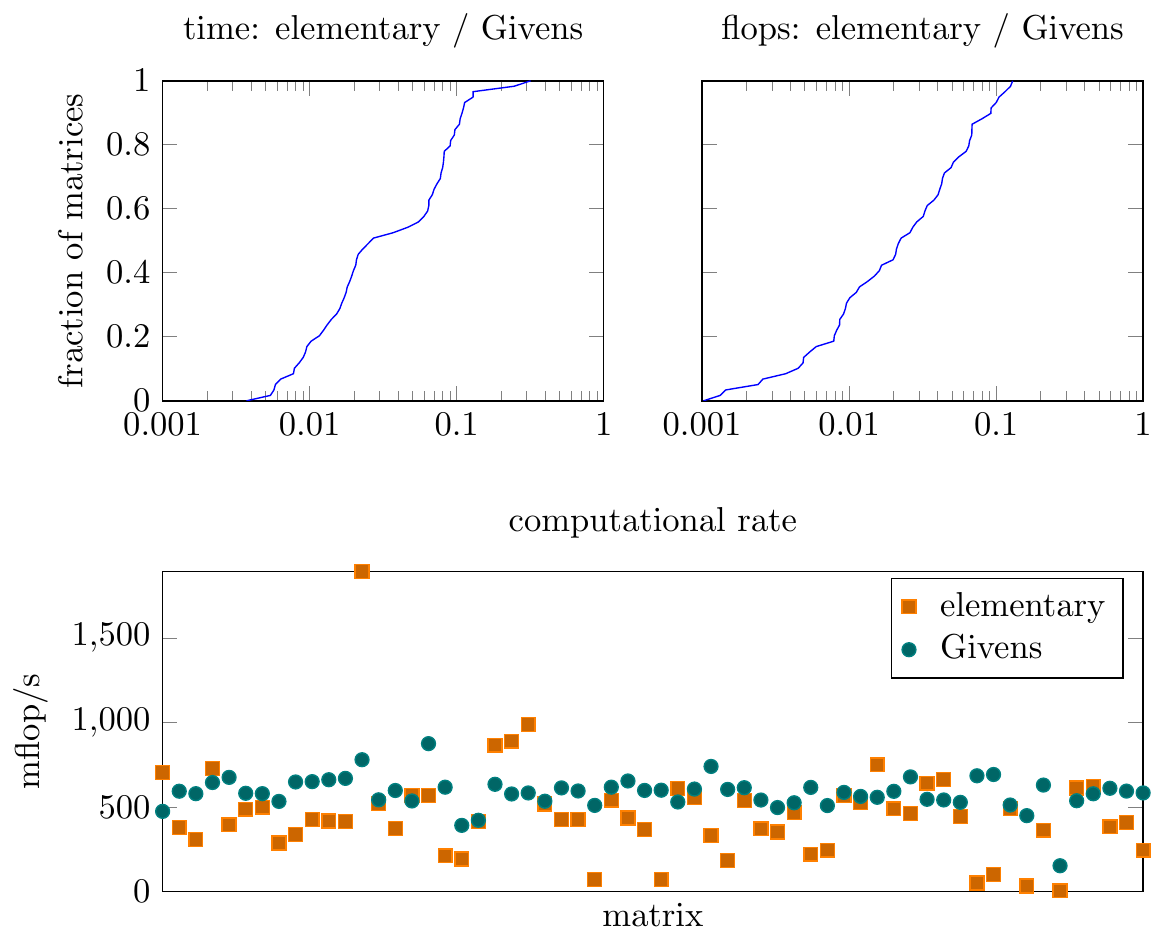}\caption{\label{fig:performance-elementary-vs-givens}A comparison of the two
variants of our algorithm using the same type of plots as in Figure~\ref{fig:performance-givens-vs-spqr}.}
\end{figure}

Figure~\ref{fig:performance-elementary-vs-spqr} compares the elementary-stabilized
variant with SPQR. Although, as we have seen in the previous figures,
the computational rate of our algorithm is orders of magnitude lower,
we preserve sparsity much better. Our algorithm always performs fewer
flops than SPQR, with a median ratio of 0.017. Due to the much better
sparsity, we are actually faster in 42 of the 60 matrices, with a
time ratio that ranges between 0.0011 and 3.64.

\begin{figure}
\noindent \centering{}\includegraphics{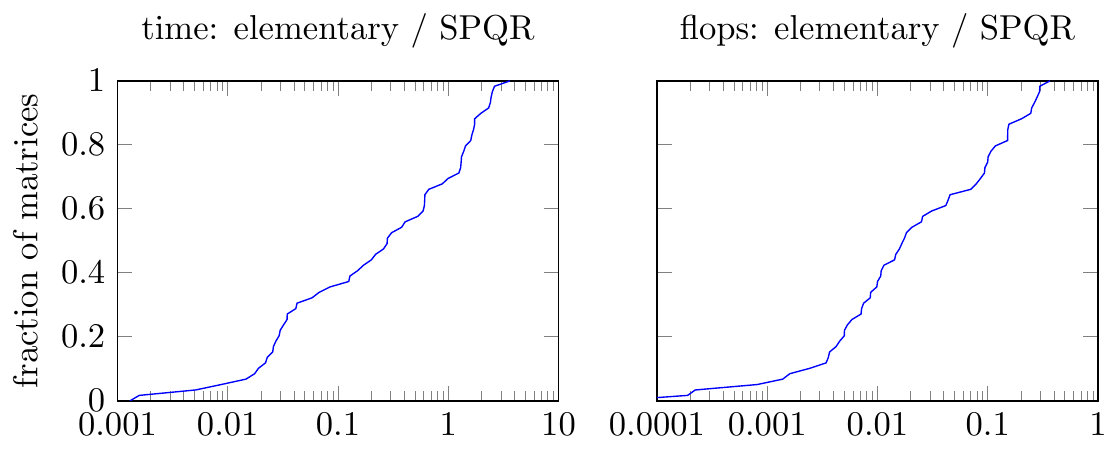}\caption{\label{fig:performance-elementary-vs-spqr}A comparison of the elementary-stabilized
variant of our algorithm with SPQR. (The computational rates of the
two algorithms are presented in Figures~\ref{fig:performance-givens-vs-spqr}
and~\ref{fig:performance-elementary-vs-givens}.)}
\end{figure}

Finally, Figure~\ref{fig:performance-elementary-vs-ma57} compares
the performance of the elementary-stabilized variant with MA57. We
see that MA57 is better than our algorithm at preserving sparsity,
requiring up to a factor of 21.7 fewer flops than our algorithm. MA57
also achieves a significantly faster computational rate of up to 18.55
gflop/s, although in 33 of the matrices it falls below 1 gflop/s and
can be as low as 51 mflop/s. Nevertheless, MA57 is faster in all but
3 matrices, with a speedup that ranges between 0.75 and 333.

\begin{figure}
\noindent \centering{}\includegraphics{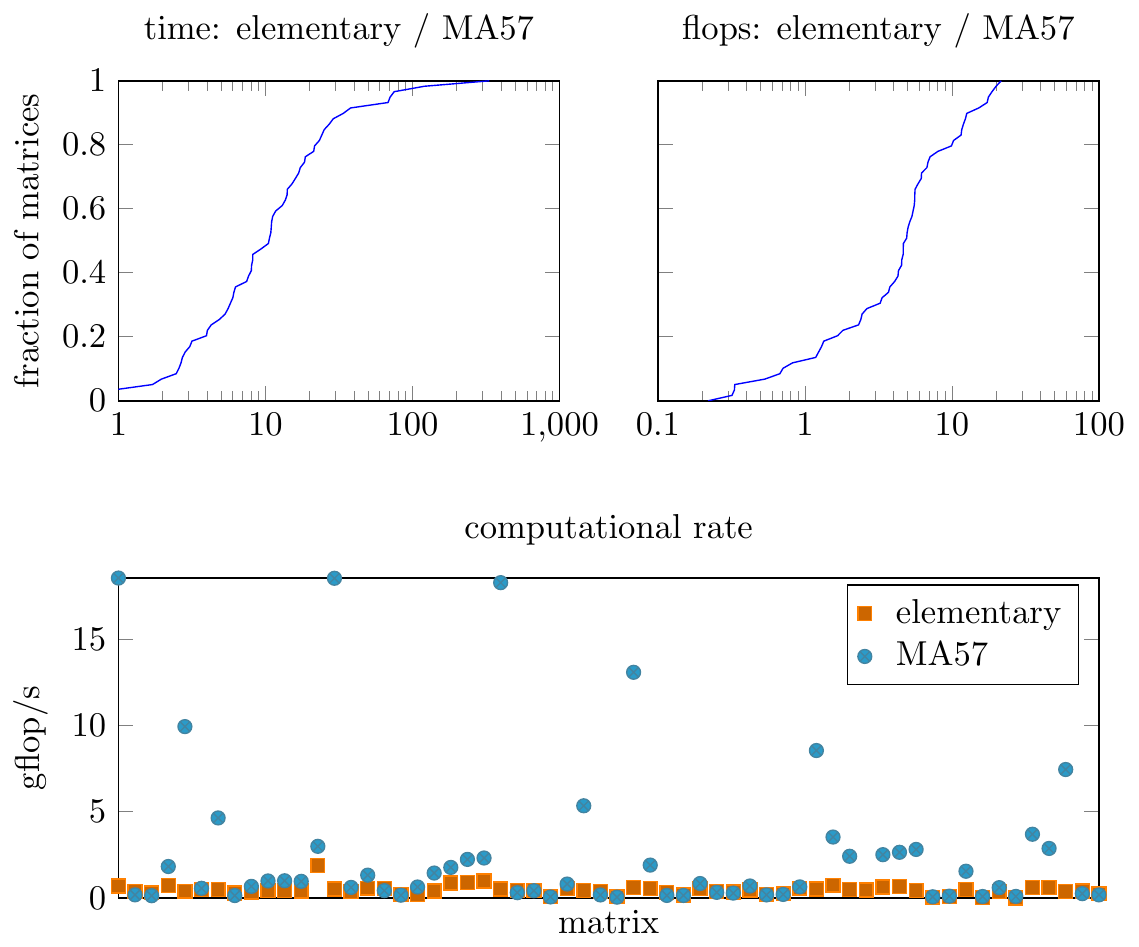}\caption{\label{fig:performance-elementary-vs-ma57} A comparison of the elementary-stabilized
variant of our algorithm with MA57.}
\end{figure}

In this experiment, we found two matrices for which our algorithm
and MA57 did not return the same inertia. The first matrix is AG-Monien/shock-9,
for which both versions of our algorithm returned a negative index
of inertia of 18,168, and MA57 returned 18,178. According to the annotation
in the SuiteSparse collection, this matrix has a nullspace of dimension
137, so the discrepancy is not surprising. The second matrix is AG-Monien/se,
which has a nullspace of dimension 457, and for which our elementary-transformation
based algorithm returned 16,215, while the Givens-rotation version
and MA57 returned 16,175.

In our final experiment, we made a more detailed comparison of our
algorithm with MA57. Our goal is to find matrices where our more conservative
approach pays off. We started with the 422 symmetric, real, indefinite
matrices of order $8{,}192\leq n\leq1{,}048{,}576$ in the SuiteSparse
collection, but kept only the 54 matrices whose MA57 factor had at
least twice as many nonzeros as the Cholesky factor would have contained
(had the matrix been positive definite). This produced a set of matrices
on which pivoting in MA57 caused significant fill. For each of the
54 matrices, we invoked our factorization and MA57 using two orderings:
an optimistic nested-dissection ordering of $A$ and a conservative
nested-dissection ordering of the structure of $A^{T}A$. We then
compared MA57 and our algorithm with respect to a parameter we refer
to as fill, which we define as the number of nonzeros in the triangular
factor, divided by the number of nonzeros in $A$.\footnote{In defining the fill of our factorization, we disregard the size of
its triangularizing factor. The reason for this is that this factor
is represented only implicitly in the algorithm and is not stored
in memory.} We omit the results for MA57 using an $A^{T}A$ ordering because
MA57 always produced less fill with an $A$ ordering than with an
$A^{T}A$ ordering. In 19 of the matrices we found that the fill in
our algorithm is lower than the fill of MA57, sometimes dramatically
so. A list of these matrices is given in Table~\ref{tab:fill-ma57}.

\begin{table}%
\caption{\label{tab:fill-ma57}Matrices for which our algorithm with one of the orderings produced less fill than MA57. We define fill as the number of nonzero elements in the triangular factor, divided by the number of nonzeros in $A$. Column headings nd and wide indicate nested-dissection ordering of $A$ and of $A^{T}A$ respectively (the latter of the two is equivalent to a wide-separator nested-dissection ordering of $A$). The symbol $\dagger$ indicates that the computation exceeded the maximal allocated time of one hour, and $\ddagger$ indicates that the algorithm required more memory than the allocated 16~GB.}%
\centering \filltable
\end{table}

As in the previous experiment, we found six matrices for which the
different algorithms did not return the same inertia. Of these matrices,
two are identified by the SuiteSparse matrix collection as having
a high-dimensional nullspace; three are large matrices for which the
dimension of the nullspace is not known, but all three are binary;
and finally, the matrix GHS\_index/bratu3 is nonsingular but has the
same features as the example of Section~\ref{sec:numerical-analysis}.
The details are in Table~\ref{tab:inertia-ma57}.
\begin{table}%
\caption{\label{tab:inertia-ma57}The computed negative index of inertia for matrices on which the two algorithms did not return the same value.}%
\centering \inertiatable%
\end{table}

\section{\label{sec:discussion-and-conclusions}Discussion and Conclusions}

In this paper, we proposed a sparse variant of Wilkinson's inertia
algorithm. The new inertia algorithm provides provable a priori bounds
on the fill, which other algorithms do not provide.

Our performance experiments show that even without using the techniques
that are necessary to obtain top performance from a sparse-matrix
algorithm (such as using supernodes), our algorithm is competitive
and often superior to a top-quality sparse $QR$ factorization code
due to the sparser factors that we produce in our algorithm.

Nevertheless, our algorithm is usually slower than MA57, often much
slower. Due to the use of supervariables in MA57, it achieves much
higher computational rates, and thanks to its use of threshold pivoting
and a matching algorithm that allows it to rescale the matrix so as
to prevent pivoting,\cite{duff05} it is better able to preserve
sparsity.\footnote{The recently released factorization code HSL\_MA97 is able not only
to rescale the matrix based on the matching, but also to reorder it.
This can potentially further reduce the need for pivoting. However,
computing the matching requires time, and the reordering can potentially
increase fill, and so the use of matching is recommended by the authors
of HSL only for matrices that require a great deal of pivoting.} For matrices that do not require significant pivoting, disabling
matching-based rescaling can further improve MA57's performance. Other
ordering methods, such as minimum degree, can produce less fill and
thus yield further gains. These techniques can be used in our algorithm
as well, and we expect significant improvement if they are used. Even
without this, we found several cases where our approach yields less
fill, and one matrix (Andrianov/net150) that MA57 cannot factor within
the allotted time and space, whereas we can.

\section*{Acknowledgments}

We thank the anonymous referee for pointing out several errors in
our description of MA57 and PARDISO, and for many other comments that
greatly helped to improve the paper.

This research was supported in part by grant 1045/09 from the Israel
Science Foundation (founded by the Israel Academy of Sciences and
Humanities) and by grant 2010231 from the US-Israel Binational Science
Foundation.

\bibliography{paper}
\end{document}